\documentclass[11pt]{article}

\usepackage[T1]{fontenc}
\usepackage{lmodern}
\usepackage{microtype}
\usepackage[a4paper,margin=30mm]{geometry}
\usepackage{amsmath,amssymb,amsthm,mathtools}
\usepackage{enumitem}
\usepackage[hidelinks]{hyperref}

\hypersetup{
  pdftitle={Arithmetic classification of the Dirichlet eigenvalues for a one-dimensional nonlocal p-Laplacian problem},
  pdfauthor={Shingo Takeuchi},
  pdfkeywords={p-Laplacian, nonlinear eigenvalue, Dirichlet eigenvalues, beta function, transcendence}
}

\numberwithin{equation}{section}
\allowdisplaybreaks

\newtheorem{theorem}{Theorem}[section]

\newtheorem{corollary}[theorem]{Corollary}
\theoremstyle{remark}
\newtheorem{remark}[theorem]{Remark}
\newtheorem{example}[theorem]{Example}

\newcommand{\Qbar}{\overline{\mathbb{Q}}}
\newcommand{\Qbarp}{\overline{\mathbb{Q}}_{>0}}
\newcommand{\R}{\mathbb{R}}
\newcommand{\N}{\mathbb{N}}
\newcommand{\norm}[1]{\left\lVert #1\right\rVert}
\newcommand{\abs}[1]{\left\lvert #1\right\rvert}
\DeclareMathOperator{\Beta}{B}

\title{Arithmetic classification of the Dirichlet eigenvalues for a one-dimensional nonlocal \texorpdfstring{$p$}{p}-Laplacian problem}
\author{Shingo Takeuchi\\[2mm]
\small Department of Mathematical Sciences, Shibaura Institute of Technology}
\date{}

\begin{document}
\maketitle

\begin{abstract}
We classify the arithmetic nature of the Dirichlet eigenvalues of a homogeneous
nonlocal one-dimensional $p$-Laplacian problem with power-type nonlinearity
$\abs{u}^{q-2}u$. For rational $p,q>1$, either all eigenvalues are algebraic or all
are transcendental. Algebraicity holds exactly
when the interval length is an algebraic multiple of a distinguished scale
determined by a beta value. This scale is transcendental. When \mbox{$p=q$}, the
criterion reduces to algebraicity of the interval length divided by $\pi$,
independently of the rational exponent.
\end{abstract}

\medskip
\noindent\textbf{Keywords.}
$p$-Laplacian; Dirichlet eigenvalues; generalized trigonometric functions;
beta function; transcendence.

\smallskip
\noindent\textbf{2020 Mathematics Subject Classification.}
Primary 34L15, 11J81; Secondary 33B15, 35P30, 35J92.

\section{Introduction}

Let $p,q>1$ and let $I=(a,b)$ be an interval of length $\ell=b-a$. We consider
\begin{equation}\label{eq:problem}
 -\bigl(\abs{u'}^{p-2}u'\bigr)'
 =\lambda\norm{u}_{L^q(I)}^{p-q}\abs{u}^{q-2}u
 \quad\text{in }I,
 \qquad u(a)=u(b)=0,
\end{equation}
where
\[
 \norm{u}_{L^q(I)}
 :=\left(\int_a^b\abs{u(x)}^q\,dx\right)^{1/q}.
\]
An eigenfunction is a nonzero weak solution in $W^{1,p}_0(I)$. The nonlocal
factor makes the equation invariant under multiplication of $u$ by any nonzero
constant.

The known eigenvalue formula involves the generalized half-period
\begin{equation}\label{eq:pipq-intro}
 \pi_{p,q}:=2\int_0^1(1-t^q)^{-1/p}\,dt
 =\frac2q\Beta\!\left(1-\frac1p,\frac1q\right),
\end{equation}
where $\Beta$ denotes Euler's beta function. For rational $p,q>1$, set
\begin{equation}\label{eq:r-intro}
 r_{p,q}:=\frac{pq}{pq+p-q}.
\end{equation}
The purpose of this note is to prove that an eigenvalue is algebraic if and
only if $\ell/\pi_{p,q}^{r_{p,q}}$ is algebraic. The criterion is independent of
the index, so either all eigenvalues are algebraic or all are transcendental.
Schneider's theorem~\cite{Schneider1941} shows that the distinguished scale
$\pi_{p,q}^{r_{p,q}}$ is transcendental. Hence algebraic lengths give only
transcendental eigenvalues, whereas a countable dense set of transcendental
lengths gives algebraic eigenvalues. For $p=q$, the criterion reduces to
algebraicity of $\ell/\pi$, independently of both the index and the rational
exponent.

The analytic ingredients are known. Dr\'abek and
Man\'asevich~\cite{DrabekManasevich1999} obtained explicit solutions for the
corresponding nonhomogeneous problem. Franzina and
Lamberti~\cite{FranzinaLamberti2010} recorded the complete eigenvalue
sequence for \eqref{eq:problem}; its dependence on the parameters was studied
by Kajikiya, M.~Tanaka and S.~Tanaka~\cite{KajikiyaTanakaTanaka2025}.
We recall the eigenvalue formula in Section~2, treat its beta value in
Section~3, and establish the arithmetic classification and its diagonal
consequence in Sections~4 and~5.

\section{The explicit Dirichlet eigenvalues}

Write $\N:=\{1,2,\ldots\}$ and $p^*:=p/(p-1)$. Let
\[
 F_{p,q}(x):=\int_0^x(1-t^q)^{-1/p}\,dt,\qquad 0\le x\le1.
\]
Its inverse on $[0,\pi_{p,q}/2]$, extended by reflection and odd
$2\pi_{p,q}$-periodicity, is denoted by $\sin_{p,q}$. For background, see Kobayashi and
Takeuchi~\cite{KobayashiTakeuchi2019}. The function $u(x):=\sin_{p,q}x$ satisfies
\begin{equation}\label{eq:sinpq-ode}
 -\bigl(\abs{u'}^{p-2}u'\bigr)'
 =\frac q{p^*}\abs{u}^{q-2}u,
 \qquad x\in\R.
\end{equation}

\begin{theorem}[Known eigenvalue formula]\label{thm:eigenvalues}
Let $p,q>1$ and let $I=(a,b)$ have length $\ell=b-a$. The eigenvalues of
\eqref{eq:problem} are precisely
\begin{equation}\label{eq:lambda-eigenvalues}
 \lambda_n(p,q;\ell)
 =\frac q{p^*}\left(\frac{n\pi_{p,q}}\ell\right)^p
 \left(\frac{p^*+q}{\ell p^*}\right)^{(p-q)/q},
 \qquad n\in\N.
\end{equation}
A corresponding eigenfunction is
\begin{equation}\label{eq:eigenfunction}
 v_n(x):=\sin_{p,q}\!\left(\frac{n\pi_{p,q}}\ell(x-a)\right).
\end{equation}
In particular, all eigenvalues are positive and
\begin{equation}\label{eq:n-factor}
 \lambda_n(p,q;\ell)=n^p\lambda_1(p,q;\ell).
\end{equation}
\end{theorem}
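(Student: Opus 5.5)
We reduce the nonlocal problem to a local one and then use the explicit structure of the local Dirichlet solutions. By the scale invariance noted in the introduction, we may normalize an eigenfunction $u$ so that $\norm{u}_{L^q(I)}=1$ and translate so that $a=0$. Then $u$ is a nonzero weak solution of the local problem
\[
 -\bigl(\abs{u'}^{p-2}u'\bigr)'=\mu\abs{u}^{q-2}u,\qquad u(0)=u(\ell)=0,
\]
with $\mu=\lambda$. Conversely, any nonzero solution of this local problem with some $\mu$ gives an eigenfunction of \eqref{eq:problem}, with $\lambda=\mu\norm{u}_{L^q(I)}^{q-p}$. So the task is to describe all Dirichlet solutions of the local problem and to compute their $L^q$ norms.

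\emph{Step 1 (regularity and first integral).} Since $\abs{u'}^{p-2}u'$ has a weak derivative in $L^\infty$, we get $u\in C^1$ and $\abs{u'}^{p-2}u'\in C^1$. Multiplying by $u'$ yields the conserved energy
\[
 \frac{\abs{u'}^p}{p^*}+\frac{\mu\abs{u}^q}{q}=E .
\]
Here $E>0$ and $\mu>0$, because $u\not\equiv0$; testing the equation with $u$ gives $\mu>0$.

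\emph{Step 2 (classification).} For $A>0$ and $\omega>0$, equation \eqref{eq:sinpq-ode} shows that $w(x)=A\sin_{p,q}(\omega x)$ solves the local equation with
\[
 \mu=\frac q{p^*}\,\omega^pA^{p-q}.
\]
Given $\mu$ and $E$, choose $A$ so that $\mu A^q/q=E$, and $\omega$ accordingly. On each interval where $u'\neq0$, separation of variables in the first integral shows that $u$ coincides with $\pm A\sin_{p,q}(\omega(x-x_0))$. The points where $u'=0$ are exactly the points where $\abs{u}=A$. There $\abs{u'}^{p-2}u'$ has nonzero derivative, so $u'$ changes sign strictly and these points are isolated. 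Hence the pieces glue to a single translate of $A\sin_{p,q}(\omega\,\cdot)$. Since $u(0)=0$, $u$ is a multiple of $\sin_{p,q}(\omega x)$. The zeros of $\sin_{p,q}$ are at the integer multiples of $\pi_{p,q}$, so $u(\ell)=0$ forces $\omega=n\pi_{p,q}/\ell$ with $n\in\N$. Thus every eigenfunction is a nonzero multiple of $v_n$ in \eqref{eq:eigenfunction}. This step is the main obstacle: the operator degenerates where $u'=0$, so the usual Cauchy uniqueness fails. I would handle it through the energy identity and the strict sign change of $u'$ described above, rather than through Picard--Lindel\"of.

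\emph{Step 3 (norm computation).} By reflection and periodicity,
\[
 \norm{v_n}_{L^q}^q=\frac{2\ell}{\pi_{p,q}}\int_0^{\pi_{p,q}/2}\sin_{p,q}^q x\,dx=\frac{2\ell}{\pi_{p,q}}\int_0^1 t^q(1-t^q)^{-1/p}\,dt .
\]
Substituting $s=t^q$ turns the last integral into $\frac1q\Beta\bigl(1+\frac1q,1-\frac1p\bigr)$. The identity $\Beta(\alpha+1,\beta)=\frac{\alpha}{\alpha+\beta}\Beta(\alpha,\beta)$ together with \eqref{eq:pipq-intro} then gives
\[
 \norm{v_n}_{L^q}^q=\frac{\ell p^*}{p^*+q}.
\]
Inserting $A=1$, $\omega=n\pi_{p,q}/\ell$ into $\lambda=\mu\norm{v_n}^{q-p}$ with $\mu=\frac q{p^*}\omega^p$ gives \eqref{eq:lambda-eigenvalues}. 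The formula is independent of the multiple chosen, by homogeneity. Positivity and \eqref{eq:n-factor} are then read off directly from the formula.
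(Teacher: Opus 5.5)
Your proposal is correct, and it differs from the paper in how it shows that the spectrum is complete. The paper does not prove that there are no further eigenvalues. It takes that from Franzina and Lamberti (Theorem~5.3, built on Dr\'abek--Man\'asevich) and only checks the normalization: that $v_n$ satisfies the local equation with constant $\frac q{p^*}(n\pi_{p,q}/\ell)^p$, and that $\norm{v_n}_{L^q(I)}^q=\ell p^*/(p^*+q)$, the latter quoted from Kobayashi--Takeuchi, formula~(21).

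Your Steps~1--2 replace that citation with a self-contained phase-plane argument:
\begin{itemize}
\item $C^1$ regularity of $u$ and of $\abs{u'}^{p-2}u'$;
\item the first integral, with $E,\mu>0$;
\item separation of variables on the intervals where $u'\ne0$, where automatically $\abs{u}<A$;
\item the fact that $(\abs{u'}^{p-2}u')'=\mp\mu A^{q-1}\ne0$ wherever $u=\pm A$, so critical points are isolated and hence finite on $[0,\ell]$.
\end{itemize}
This avoids any Cauchy-uniqueness issue, not only where $u'=0$ but also at the zeros of $u$ when $q<2$, since $E>0$ forces $u'\ne0$ there.

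Your Step~3 is the paper's computation done by hand, via $s=t^q$ and $\Beta(\alpha+1,\beta)=\frac{\alpha}{\alpha+\beta}\Beta(\alpha,\beta)$, rather than by quotation.

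The paper's version is shorter, but the word ``precisely'' in the statement rests entirely on the literature. Yours makes the theorem independent of that citation. The price is the gluing step, which you state in one sentence. It deserves a line of verification: an increasing piece and a decreasing piece meeting at a point where $u=\pm A$ are the same translate of $A\sin_{p,q}(\omega\,\cdot)$. This follows because $-\sin_{p,q}y=\sin_{p,q}(y+\pi_{p,q})$, and both phases must equal $\pm\pi_{p,q}/2$ modulo $2\pi_{p,q}$ at that point.
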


\begin{proof}[Reference and normalization check]
The complete eigenvalue sequence is given by Franzina and
Lamberti~\mbox{\cite[Theorem~5.3]{FranzinaLamberti2010}}, based on the one-dimensional
results of Dr\'abek and Man\'asevich~\cite{DrabekManasevich1999}. The quantity
called $\pi_{pq}$ in the former reference equals $(q/2)\pi_{p,q}$ in our
notation. We verify the constants in \eqref{eq:lambda-eigenvalues} directly.

Using \cite[(21)]{KobayashiTakeuchi2019} with the integer index
in that formula set to $1$, together with reflection and periodicity,
we obtain
\[
 \begin{aligned}
 \norm{v_n}_{L^q(I)}^q
 &=\frac{2\ell}{\pi_{p,q}}
   \int_0^{\pi_{p,q}/2}\sin_{p,q}^{q} y\,dy\\
 &=\frac{2\ell}{\pi_{p,q}}
   \frac{1/q}{1/p^*+1/q}\,\frac{\pi_{p,q}}2
 =\frac{\ell p^*}{p^*+q}.
 \end{aligned}
\]
On the other hand, \eqref{eq:sinpq-ode} implies
\[
 -\bigl(\abs{v_n'}^{p-2}v_n'\bigr)'
 =\frac q{p^*}\left(\frac{n\pi_{p,q}}\ell\right)^p
  \abs{v_n}^{q-2}v_n.
\]
Thus the eigenvalue associated with $v_n$ is
$\frac q{p^*}(n\pi_{p,q}/\ell)^p\norm{v_n}_{L^q(I)}^{q-p}$, which is
\eqref{eq:lambda-eigenvalues}. The absence of further eigenvalues is part of
the cited theorem, and \eqref{eq:n-factor} follows from the formula.
\end{proof}

\section{Transcendence of the generalized half-period}

We use the following classical consequence of
Schneider's theory~\cite{Schneider1941}: if $x,y$ are positive rational
nonintegers and $x+y\notin\mathbb Z$, then $\Beta(x,y)$ is transcendental.
Masser~\cite[p.~234]{Masser1977} also records the conclusion for positive
rational nonintegral beta arguments.

\begin{theorem}\label{thm:pipq-trans}
If $p,q>1$ are rational, then $\pi_{p,q}$ is transcendental.
\end{theorem}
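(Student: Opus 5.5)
The plan is to apply the Schneider criterion recalled above to the representation \eqref{eq:pipq-intro}, namely $\pi_{p,q}=\frac2q\Beta(x,y)$ with $x:=1-\frac1p$ and $y:=\frac1q$. Since $q$ is a nonzero rational, $\frac2q$ is a nonzero rational number, and hence algebraic. Transcendence of $\pi_{p,q}$ is therefore equivalent to transcendence of $\Beta(x,y)$, because the algebraic numbers form a field.

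We next check the hypotheses on the arguments. As $p,q>1$ are rational, $0<\frac1p<1$ and $0<\frac1q<1$. Hence $x,y\in(0,1)\cap\mathbb{Q}$, so both are positive rational nonintegers.

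The only remaining condition is $x+y\notin\mathbb Z$, and this is the step that needs care. We have $x+y=1-\frac1p+\frac1q\in(0,2)$, so the only integer it could equal is $1$. That happens exactly when $p=q$, so the case $p\neq q$ follows directly from the criterion.

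The diagonal case $p=q$ is where the criterion does not apply, since $x+y=1$. Here I would use the reflection formula instead:
\[
 \Beta\!\left(1-\tfrac1p,\tfrac1p\right)=\Gamma\!\left(1-\tfrac1p\right)\Gamma\!\left(\tfrac1p\right)=\frac{\pi}{\sin(\pi/p)}.
\]
This gives $\pi_{p,p}=\frac{2\pi}{p\sin(\pi/p)}$. Because $p$ is rational, $\sin(\pi/p)$ is a nonzero algebraic number. Lindemann's theorem says $\pi$ is transcendental, so $\pi_{p,p}$ is transcendental as well. Combining the two cases proves the theorem.
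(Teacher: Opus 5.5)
Your proposal is correct and follows essentially the same route as the paper's proof. You apply Schneider's criterion when $p\neq q$, after checking $x,y\in\mathbb{Q}\cap(0,1)$ and $x+y\in(0,2)\setminus\{1\}$, and in the diagonal case $p=q$ you use Euler's reflection formula, the algebraicity of $\sin(\pi/p)$, and the transcendence of $\pi$.
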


\begin{proof}
Put $x=1-1/p$ and $y=1/q$, so $x,y\in\mathbb Q\cap(0,1)$. If $p\ne q$, then
$0<x+y<2$ and $x+y\ne1$; hence Schneider's theorem applies. If $p=q$, then
$y=1-x$, and Euler's reflection formula gives
\begin{equation}\label{eq:reflection}
 \Beta(x,1-x)=\Gamma(x)\Gamma(1-x)=\frac\pi{\sin(\pi x)},
\end{equation}
where $\Gamma$ is Euler's gamma function. Since $x$ is rational,
$e^{i\pi x}$ is a root of unity, so $\sin(\pi x)$ is a nonzero algebraic
number. The right-hand side is therefore transcendental. In both cases,
\eqref{eq:pipq-intro} yields the conclusion.
\end{proof}

\begin{remark}
The only integral-sum case for the arguments $1-1/p$ and $1/q$ is $p=q$,
which explains the separate use of the reflection formula. More generally, if
$x,y>0$ are rational nonintegers with $x+y=N\in\N$, the gamma recurrence gives
\[
 \Beta(x,y)
 =\frac{\prod_{k=1}^{N-1}(k-x)}{(N-1)!}
  \frac\pi{\sin(\pi x)}.
\]
The product is empty when $N=1$. All factors other than $\pi$ are nonzero
algebraic numbers. Thus the integral-sum case is also transcendental, consistent
with the formulation recorded by Masser.
\end{remark}

\section{Complete arithmetic classification of the eigenvalues}

We write $\mathbb Q_{>0}:=\mathbb Q\cap(0,\infty)$ for the positive rational
numbers, $\Qbar$ for the field of algebraic numbers, and
$\Qbarp:=\Qbar\cap(0,\infty)$ for the positive algebraic numbers.

For rational $p,q>1$, recall $r_{p,q}$ from \eqref{eq:r-intro} and set
\[
 \tau_{p,q}:=\pi_{p,q}^{r_{p,q}}.
\]

\begin{theorem}[Arithmetic classification for fixed parameters]\label{thm:classification}
Fix rational numbers $p,q>1$, and let $I=(a,b)$ have length $\ell=b-a$.
The following statements are equivalent:
\begin{enumerate}[label=\textup{(\roman*)},itemsep=1pt,topsep=4pt]
 \item $\ell/\tau_{p,q}$ is algebraic;
 \item $\lambda_n(p,q;\ell)$ is algebraic for some $n\in\N$;
 \item $\lambda_n(p,q;\ell)$ is algebraic for every $n\in\N$.
\end{enumerate}
Likewise, the following statements are equivalent:
\begin{enumerate}[label=\textup{(\roman*)},start=4,itemsep=1pt,topsep=4pt]
 \item $\ell/\tau_{p,q}$ is transcendental;
 \item $\lambda_n(p,q;\ell)$ is transcendental for some $n\in\N$;
 \item $\lambda_n(p,q;\ell)$ is transcendental for every $n\in\N$.
\end{enumerate}
In particular, for the fixed pair $(p,q)$, the interval lengths producing
algebraic eigenvalues form exactly the set
\[
 \mathcal A_{p,q}
 :=\{\ell>0:\lambda_n(p,q;\ell)\in\Qbar\text{ for every }n\in\N\}
 =\tau_{p,q}\Qbarp.
\]
\end{theorem}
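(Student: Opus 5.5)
We rewrite the formula \eqref{eq:lambda-eigenvalues} from Theorem~\ref{thm:eigenvalues} as a single power of $\ell/\tau_{p,q}$ times an algebraic constant. Collecting powers of $\ell$ and $\pi_{p,q}$ gives
\[
 \lambda_n(p,q;\ell)
 =C_{p,q}\,n^p\,\pi_{p,q}^{p}\,\ell^{-p-(p-q)/q},
 \qquad
 C_{p,q}:=\frac q{p^*}\left(\frac{p^*+q}{p^*}\right)^{(p-q)/q}.
\]
The exponent of $\ell$ is $-s$ with
\[
 s:=p+\frac{p-q}{q}=\frac{pq+p-q}{q}.
\]
Since $p,q>1$, we have $pq+p-q=p(q-1)+p-q+q\cdot 0\ldots$; more simply, $pq+p-q=q(p-1)+p>0$, so $s>0$ is rational. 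Note that $p/s=pq/(pq+p-q)=r_{p,q}$. Hence
\[
 \lambda_n(p,q;\ell)=C_{p,q}\,n^p\left(\frac{\pi_{p,q}^{r_{p,q}}}{\ell}\right)^{s}
 =C_{p,q}\,n^p\,(\ell/\tau_{p,q})^{-s}.
\]
This identity is the key step. Everything else follows from the fact that positive rational powers preserve algebraicity of positive reals.

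The arithmetic facts are as follows. Here $C_{p,q}$ and $n^p$ are positive real numbers obtained from positive rationals by rational exponentiation, so they lie in $\Qbarp$: if $\alpha\in\Qbarp$ and $m/k\in\mathbb Q$, then $\alpha^{m/k}$ is a positive real root of $X^k-\alpha^m$, and is therefore algebraic. Put $x:=\ell/\tau_{p,q}>0$. If $x\in\Qbarp$, then $x^{-s}\in\Qbarp$, so every $\lambda_n$ is algebraic. This gives (i)$\Rightarrow$(iii), and (iii)$\Rightarrow$(ii) is trivial. For (ii)$\Rightarrow$(i), suppose $\lambda_n$ is algebraic for one $n$. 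Then
\[
 x=\bigl(\lambda_n/(C_{p,q}n^p)\bigr)^{-1/s}
\]
is a positive rational power of a positive algebraic number, hence algebraic.

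The second block (iv)--(vi) is the contrapositive of the first. Statement (iv) is the negation of (i), (vi) is the negation of (ii), and (v) is the negation of (iii). The equivalence of the first three statements therefore transfers directly.

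For the description of $\mathcal A_{p,q}$, the equivalence (i)$\Leftrightarrow$(iii) shows that $\ell\in\mathcal A_{p,q}$ exactly when $\ell/\tau_{p,q}\in\Qbar\cap(0,\infty)$, that is, $\ell\in\tau_{p,q}\Qbarp$.

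There is no real obstacle here. The only point needing care is the exponent bookkeeping in the first step, namely checking that the power of $\pi_{p,q}$ divided by the power of $\ell^{-1}$ is exactly $r_{p,q}$, and that $s\neq0$ so that the power map can be inverted. Note that the transcendence result of Theorem~\ref{thm:pipq-trans} is not needed for the equivalences themselves; it only enters in interpreting the set $\mathcal A_{p,q}$.
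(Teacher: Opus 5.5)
Your proof is correct and follows the paper's argument essentially step for step: your $C_{p,q}$ and $s$ are the paper's $\kappa_{p,q}$ and $\theta_{p,q}$, your identity $\lambda_n=C_{p,q}n^p(\ell/\tau_{p,q})^{-s}$ is \eqref{eq:arithmetic-factorization}, and the argument closes the same way, with invariance of algebraicity of positive reals under nonzero rational powers and negation for (iv)--(vi). Two cosmetic slips: the aborted expansion $p(q-1)+p-q$ equals $pq-q$, not $pq+p-q$, though your replacement $q(p-1)+p$ is right; and in (ii)$\Rightarrow$(i) the exponent $-1/s$ is negative rather than positive, which is harmless because your root-of-$X^k-\alpha^m$ argument covers any rational exponent.
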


\begin{proof}
For $z>0$ and $s\in\mathbb Q\setminus\{0\}$, algebraicity of $z$ is equivalent
to algebraicity of $z^s$: rational powers of positive algebraic numbers are
algebraic, and the converse follows by taking the power $1/s$.

Set
\[
 \theta_{p,q}:=\frac{pq+p-q}{q},\qquad
 \kappa_{p,q}:=\frac q{p^*}
  \left(\frac{p^*+q}{p^*}\right)^{(p-q)/q}.
\]
Then $r_{p,q},\theta_{p,q}\in\mathbb Q_{>0}$,
$\kappa_{p,q}\in\Qbarp$ and $r_{p,q}\theta_{p,q}=p$.
Fix $n\in\N$. Since $\tau_{p,q}=\pi_{p,q}^{r_{p,q}}$, the eigenvalue formula
\eqref{eq:lambda-eigenvalues} gives
\begin{equation}\label{eq:arithmetic-factorization}
 \lambda_n(p,q;\ell)
 =n^p\kappa_{p,q}\pi_{p,q}^p\ell^{-\theta_{p,q}}
 =n^p\kappa_{p,q}
  \left(\frac{\ell}{\tau_{p,q}}\right)^{-\theta_{p,q}}.
\end{equation}
The coefficient $n^p\kappa_{p,q}$ is positive and algebraic. The observation
above therefore yields
\[
 \begin{aligned}
 \lambda_n(p,q;\ell)\in\Qbar
 &\quad\Longleftrightarrow\quad
 \left(\frac{\ell}{\tau_{p,q}}\right)^{-\theta_{p,q}}\in\Qbar\\
 &\quad\Longleftrightarrow\quad
 \frac{\ell}{\tau_{p,q}}\in\Qbar.
 \end{aligned}
\]
The condition on the right is independent of $n$, proving
\textup{(i)}--\textup{(iii)}. Taking complements gives the transcendental
equivalence for each $n$, and the same independence proves
\textup{(iv)}--\textup{(vi)}. Finally, \textup{(i)} says precisely that
$\ell\in\tau_{p,q}\Qbarp$, which identifies $\mathcal A_{p,q}$.
\end{proof}

The all-or-nothing conclusion also follows directly from
\eqref{eq:n-factor}, since $n^p$ is algebraic for rational $p$.
The transcendence result of Section~3 further determines the arithmetic
nature of the exceptional lengths.

\begin{corollary}[Algebraic and exceptional interval lengths]\label{cor:exceptional-size}
Fix rational $p,q>1$. Every algebraic length $\ell>0$ gives only transcendental
eigenvalues. The set $\mathcal A_{p,q}$ is countable and dense in $(0,\infty)$,
and all its elements are transcendental. Its complement has full Lebesgue
measure and consists exactly of the lengths giving only transcendental
eigenvalues.
\end{corollary}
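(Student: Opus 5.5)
The plan is to derive everything from Theorem~\ref{thm:classification}, which identifies $\mathcal A_{p,q}=\tau_{p,q}\Qbarp$, together with Theorem~\ref{thm:pipq-trans}. The first task is to show that $\tau_{p,q}$ is transcendental. Since $r_{p,q}$ is a nonzero rational number, positive for $p,q>1$ because $pq+p-q>p>0$, the observation at the start of the proof of Theorem~\ref{thm:classification} applies. It says that for $z>0$, $z$ is algebraic if and only if $z^{r_{p,q}}$ is. Taking $z=\pi_{p,q}$, Theorem~\ref{thm:pipq-trans} then gives that $\tau_{p,q}=\pi_{p,q}^{r_{p,q}}$ is transcendental.

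Next I will treat algebraic lengths. Let $\ell>0$ be algebraic. If $\ell/\tau_{p,q}$ were algebraic, then $\tau_{p,q}=\ell\cdot(\ell/\tau_{p,q})^{-1}$ would be algebraic, which is a contradiction. By (iv)$\Rightarrow$(vi) of Theorem~\ref{thm:classification}, all eigenvalues are then transcendental.

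The same argument handles the elements of $\mathcal A_{p,q}$. Each has the form $\tau_{p,q}\alpha$ with $\alpha\in\Qbarp$, and a nonzero algebraic multiple of a transcendental number is transcendental. So every element of $\mathcal A_{p,q}$ is transcendental.

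For the size statements, countability follows because $\Qbarp$ is countable and $\alpha\mapsto\tau_{p,q}\alpha$ is a bijection onto $\mathcal A_{p,q}$. For density, $\Qbarp\supset\mathbb Q_{>0}$ is dense in $(0,\infty)$, and multiplication by the positive constant $\tau_{p,q}$ is a homeomorphism of $(0,\infty)$, so the image is dense. Being countable, $\mathcal A_{p,q}$ has Lebesgue measure zero, and hence its complement in $(0,\infty)$ has full measure.

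Finally, the complement consists of the $\ell$ with $\ell/\tau_{p,q}$ transcendental. By the equivalence (iv)$\Leftrightarrow$(vi), these are exactly the lengths giving only transcendental eigenvalues. I do not expect a real obstacle, since every step is elementary once the two earlier theorems are in hand. The only point needing care is the transcendence of $\tau_{p,q}$, which depends on $r_{p,q}$ being a nonzero rational.
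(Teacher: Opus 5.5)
Your proposal is correct and follows the paper's proof essentially step for step. You get the transcendence of $\tau_{p,q}$ from Theorem~\ref{thm:pipq-trans} via the rational-power observation, use $\mathcal A_{p,q}=\tau_{p,q}\Qbarp$ for the transcendence, countability, density and measure claims, and read off the rest from the equivalences in Theorem~\ref{thm:classification}; the only cosmetic difference is that you treat algebraic lengths directly via $\tau_{p,q}=\ell(\ell/\tau_{p,q})^{-1}$, whereas the paper notes that no algebraic length lies in $\mathcal A_{p,q}$.
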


\begin{proof}
By Theorem~\ref{thm:pipq-trans}, $\pi_{p,q}$ is transcendental. Since
$r_{p,q}\in\mathbb Q_{>0}$, the number $\tau_{p,q}$ is also transcendental:
otherwise $\pi_{p,q}=\tau_{p,q}^{1/r_{p,q}}$ would be algebraic.
Every nonzero algebraic multiple of $\tau_{p,q}$ is therefore
transcendental. Since
$\mathcal A_{p,q}=\tau_{p,q}\Qbarp$, no algebraic length belongs to
$\mathcal A_{p,q}$. The set $\Qbarp$ is countable and dense in $(0,\infty)$,
and multiplication by $\tau_{p,q}>0$ preserves both properties. The remaining
claims follow from Theorem~\ref{thm:classification} and the fact that a
countable set has Lebesgue measure zero.
\end{proof}

We retain the notation $\theta_{p,q}$ and $\kappa_{p,q}$ introduced in the
proof of Theorem~\ref{thm:classification}. For fixed rational $p,q>1$ and
$n\in\N$, every $\beta\in\Qbarp$ can also be realized as $\lambda_n(p,q;\ell)$
at the unique length
\begin{equation}\label{eq:inverse-length}
 \ell=\tau_{p,q}
 \left(\frac{n^p\kappa_{p,q}}{\beta}\right)^{1/\theta_{p,q}}.
\end{equation}
This follows by solving \eqref{eq:arithmetic-factorization}; the length is
transcendental, and uniqueness follows from $\theta_{p,q}>0$.

\begin{example}\label{ex:non-diagonal}
For $(p,q)=(2,4)$,
\[
 \pi_{2,4}=\tfrac12\Beta(\tfrac12,\tfrac14),\qquad
 r_{2,4}=\tfrac43,\qquad \theta_{2,4}=\tfrac32,\qquad
 \kappa_{2,4}=\tfrac2{\sqrt3}.
\]
Thus
\[
 \lambda_n(2,4;\ell)
 =\frac{2n^2\pi_{2,4}^2}{\sqrt3\,\ell^{3/2}},
 \qquad \mathcal A_{2,4}=\pi_{2,4}^{4/3}\Qbarp.
\]
\end{example}

\section{The diagonal case}

Put $\pi_p:=\pi_{p,p}$. Euler's reflection formula gives
\begin{equation}\label{eq:pip}
 \pi_p=\frac{2\pi}{p\sin(\pi/p)}=a_p\pi,
 \qquad a_p:=\frac{2}{p\sin(\pi/p)}.
\end{equation}
For rational $p>1$, the coefficient $a_p$ is positive and algebraic.

\begin{theorem}[Diagonal criterion]\label{thm:diagonal}
Let $p>1$ be rational, $\ell>0$ and $n\in\N$. Then
\begin{equation}\label{eq:diagonal-eigenvalues}
 \lambda_n(p,p;\ell)=(p-1)\left(\frac{n\pi_p}{\ell}\right)^p,
\end{equation}
and
\begin{equation}\label{eq:diag-alg}
 \lambda_n(p,p;\ell)\in\Qbar
 \quad\Longleftrightarrow\quad \frac{\ell}{\pi_p}\in\Qbar
 \quad\Longleftrightarrow\quad \frac{\ell}{\pi}\in\Qbar.
\end{equation}
The same equivalences hold with ``algebraic'' replaced by ``transcendental.''
\end{theorem}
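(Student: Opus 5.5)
The plan is to specialize Theorem~\ref{thm:eigenvalues} and Theorem~\ref{thm:classification} to $q=p$, and then use \eqref{eq:pip} to replace $\pi_p$ by $\pi$.

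\emph{The formula \eqref{eq:diagonal-eigenvalues}.} Setting $q=p$ in \eqref{eq:lambda-eigenvalues} makes the exponent $(p-q)/q$ vanish, so the nonlocal factor equals $1$. The prefactor is $q/p^*=p(p-1)/p=p-1$. This gives $\lambda_n(p,p;\ell)=(p-1)(n\pi_p/\ell)^p$. As a consistency check, $r_{p,p}=p^2/p^2=1$, $\theta_{p,p}=p$ and $\kappa_{p,p}=p-1$, so $\tau_{p,p}=\pi_p$.

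\emph{The first equivalence in \eqref{eq:diag-alg}.} It is exactly (i)$\Leftrightarrow$(ii)/(iii) of Theorem~\ref{thm:classification} with $\tau_{p,p}=\pi_p$. Alternatively, one can argue directly: write $\lambda_n=(p-1)n^p(\ell/\pi_p)^{-p}$, note that $(p-1)n^p\in\Qbarp$, and use that for $z>0$ and rational $s\ne0$, $z$ is algebraic if and only if $z^s$ is.

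\emph{The second equivalence.} By \eqref{eq:pip}, $\ell/\pi_p=a_p^{-1}\,\ell/\pi$. It therefore suffices that $a_p$ is a positive algebraic number. Positivity holds because $0<\pi/p<\pi$. Algebraicity holds because $p$ is rational, so $e^{i\pi/p}$ is a root of unity; then $\sin(\pi/p)=(e^{i\pi/p}-e^{-i\pi/p})/(2i)$ is algebraic and nonzero, and $2/p$ is rational. Multiplying by the nonzero algebraic number $a_p$ or by its inverse preserves algebraicity, which gives the equivalence.

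\emph{The transcendental version and the main point to check.} Each condition in \eqref{eq:diag-alg} concerns a positive real number, so taking complements turns each equivalence into the corresponding statement with ``transcendental.'' No step is a real obstacle. The only points needing care are the simplification $q/p^*=p-1$ and the algebraicity of $\sin(\pi/p)$ for rational $p$, which the paper already uses in the proof of Theorem~\ref{thm:pipq-trans}.
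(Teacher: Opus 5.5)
Your proposal is correct and follows essentially the same route as the paper. You specialize Theorem~\ref{thm:eigenvalues} and Theorem~\ref{thm:classification} to $q=p$ (with $r_{p,p}=1$, $\theta_{p,p}=p$, $\tau_{p,p}=\pi_p$), use $\pi_p=a_p\pi$ with $a_p\in\Qbarp$ for the second equivalence, and take complements for the transcendental version; your explicit checks of $q/p^*=p-1$ and of the algebraicity and positivity of $\sin(\pi/p)$ just fill in details the paper leaves implicit.
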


\begin{proof}
Set $q=p$ in Theorem~\ref{thm:eigenvalues} and note that
$r_{p,p}=1$, $\theta_{p,p}=p$ and $\tau_{p,p}=\pi_p$.
Theorem~\ref{thm:classification} gives the first equivalence in
\eqref{eq:diag-alg}; the second follows from $\pi_p=a_p\pi$ with
$a_p\in\Qbarp$. Taking complements gives the transcendental equivalences.
\end{proof}

The criterion is independent of both $n$ and $p$, yielding the following
simultaneous statement.

\begin{corollary}[Simultaneous arithmetic behavior]\label{cor:simultaneous}
Fix $\ell>0$. The following statements are equivalent:
\begin{enumerate}[label=\textup{(\roman*)},itemsep=1pt,topsep=4pt]
 \item $\ell/\pi$ is algebraic;
 \item $\lambda_n(p,p;\ell)$ is algebraic for some rational $p>1$ and some $n\in\N$;
 \item $\lambda_n(p,p;\ell)$ is algebraic for every rational $p>1$ and every $n\in\N$.
\end{enumerate}
Likewise, the following statements are equivalent:
\begin{enumerate}[label=\textup{(\roman*)},start=4,itemsep=1pt,topsep=4pt]
 \item $\ell/\pi$ is transcendental;
 \item $\lambda_n(p,p;\ell)$ is transcendental for some rational $p>1$ and some $n\in\N$;
 \item $\lambda_n(p,p;\ell)$ is transcendental for every rational $p>1$ and every $n\in\N$.
\end{enumerate}
\end{corollary}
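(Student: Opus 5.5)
The plan is to reduce everything to Theorem~\ref{thm:diagonal}. That theorem says that for each fixed rational $p>1$ and each $n\in\N$, algebraicity of $\lambda_n(p,p;\ell)$ is equivalent to algebraicity of $\ell/\pi$. The right-hand condition depends neither on $n$ nor on $p$, so the corollary should follow by chaining equivalences across all pairs $(p,n)$.

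For the first block, I would prove (iii)$\Rightarrow$(ii)$\Rightarrow$(i)$\Rightarrow$(iii).
\begin{itemize}
\item For (iii)$\Rightarrow$(ii): the family of rational $p>1$ is nonempty (for instance $p=2$), so it suffices to specialize to $p=2$ and $n=1$.
\item For (ii)$\Rightarrow$(i): fix the rational $p>1$ and the $n\in\N$ for which $\lambda_n(p,p;\ell)\in\Qbar$. By \eqref{eq:diag-alg}, read from left to right, we get $\ell/\pi\in\Qbar$.
\item For (i)$\Rightarrow$(iii): take an arbitrary rational $p>1$ and an arbitrary $n\in\N$. The same equivalence \eqref{eq:diag-alg}, read from right to left, gives $\lambda_n(p,p;\ell)\in\Qbar$.
\end{itemize}

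The second block, (iv)--(vi), is handled in the same way. For fixed $p$ and $n$, the transcendental version of Theorem~\ref{thm:diagonal} says that $\lambda_n(p,p;\ell)$ is transcendental if and only if $\ell/\pi$ is transcendental. This holds because both numbers are real, so each is either algebraic or transcendental, and we are simply taking complements. Running the same cycle (vi)$\Rightarrow$(v)$\Rightarrow$(iv)$\Rightarrow$(vi) then closes the argument.

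There is no genuine obstacle here. The only point needing care is that the second equivalence in \eqref{eq:diag-alg} uses $a_p=2/(p\sin(\pi/p))\in\Qbarp$, which holds for every rational $p>1$. This makes the criterion uniform in $p$, and I would mention it explicitly so that the "for every rational $p$" quantifier is clearly justified.
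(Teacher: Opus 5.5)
Your proposal is correct and matches the paper's own proof, which simply applies Theorem~\ref{thm:diagonal} to each rational $p>1$ and each $n\in\N$, using that the criterion $\ell/\pi\in\Qbar$ does not depend on $p$ or $n$. Your explicit cycle of implications, and your remark that $a_p\in\Qbarp$ for every rational $p>1$, just spell out what the paper's one-line proof leaves implicit.
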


\begin{proof}
Apply Theorem~\ref{thm:diagonal} to each rational $p>1$ and each $n\in\N$.
\end{proof}

For $p=q=2$, the formula is the classical one,
$\lambda_n(2,2;\ell)=(n\pi/\ell)^2$. Thus $\ell=1$ gives only transcendental
eigenvalues, while $\ell=\pi$ gives $\lambda_n=n^2$. The preceding corollary
shows that the distinction according to $\ell/\pi$ persists simultaneously
for every rational exponent $p>1$.

\section*{Declaration of AI use}
The author used ChatGPT (OpenAI, including GPT-6 Astra Pro) to assist with
mathematical discussions and manuscript preparation. In particular, the author
learned of Schneider's theorem on the transcendence of beta values through
these discussions. The author reviewed and verified the content and takes full
responsibility for the final manuscript.

\end{document}